\DeclareFontFamily{U}{mathx}{\hyphenchar\font45}
\DeclareFontShape{U}{mathx}{m}{n}{
      <5> <6> <7> <8> <9> <10>
      <10.95> <12> <14.4> <17.28> <20.74> <24.88>
      mathx10
      }{}
\DeclareSymbolFont{mathx}{U}{mathx}{m}{n}
\DeclareMathAccent{\widecheck}      {0}{mathx}{"71}
\DeclareSymbolFont{bbold}{U}{bbold}{m}{n}
\DeclareSymbolFontAlphabet{\mathbbold}{bbold}
  \providecommand*{\toclevel@author}{999}
  \providecommand*{\toclevel@title}{0}
\begin{document}
\title*{Derivative-based Global Sensitivity Measures and Their Link with Sobol' Sensitivity Indices}
\author{Sergei Kucherenko \and Shugfang Song}

\institute{ Sergei Kucherenko \and Shugfang Song
\at Imperial College London, London, SW7 2AZ, UK
\email{s.kucherenko@imperial.ac.uk},
\email{shufangsong@nwpu.edu.cn}
}
\maketitle

\abstract{The variance-based method of Sobol' sensitivity indices is very popular
among practitioners due to its efficiency and easiness of interpretation.
However, for high-dimensional models the direct application of this method
can be very time-consuming and prohibitively expensive to use. One of the
alternative global sensitivity analysis methods known as the method of
derivative based global sensitivity measures (DGSM) has recently become
popular among practitioners. It has a link with the Morris screening method
and Sobol' sensitivity indices. DGSM are very easy to implement and evaluate
numerically. The computational time required for numerical evaluation of
DGSM is generally much lower than that for estimation of Sobol' sensitivity
indices. We present a survey of recent advances in DGSM and new results
concerning new lower and upper bounds on the values of Sobol' total
sensitivity indices $S_i^{tot} $. Using these bounds it is possible in most
cases to get a good practical estimation of the values of $S_i^{tot} $.
Several examples are used to illustrate an application of DGSM.}

\noindent \textbf{Keywords:} Global sensitivity analysis; Monte Carlo methods; Quasi Monte Carlo methods; Derivative based global measures; Morris method; Sobol’ sensitivity indices

\section{Introduction}\label{sec:1}

Global sensitivity analysis (GSA) is the study of how the uncertainty in the
model output is apportioned to the uncertainty in model inputs
\cite{Salt2008},\cite{Sobol2005}. GSA can provide
valuable information regarding the dependence of the model output to its
input parameters. The variance-based method of global sensitivity indices
developed by Sobol' \cite{Sobol1993} became very popular among
practitioners due to its efficiency and easiness of interpretation. There
are two types of Sobol' sensitivity indices: the main effect indices, which
estimate the individual contribution of each input parameter to the output
variance, and the total sensitivity indices, which measure the total
contribution of a single input factor or a group of inputs \cite{Homma1996}. The total
sensitivity indices are used to identify non-important variables which can
then be fixed at their nominal values to reduce model complexity
\cite{Salt2008}. For high-dimensional models the direct
application of variance-based GSA measures can be extremely time-consuming
and impractical.

A number of alternative SA techniques have been proposed. In this paper we
present derivative based global sensitivity measures (DGSM) and their link
with Sobol' sensitivity indices. DGSM are based on averaging local
derivatives using Monte Carlo or Quasi Monte Carlo sampling methods. These
measures were briefly introduced by Sobol' and Gershman in
\cite{Sobol1995}. Kucherenko \textit{et al }\cite{Kuch2009} introduced some other
derivative-based global sensitivity measures (DGSM) and coined the acronym
DGSM. They showed that the computational cost of numerical evaluation of
DGSM can be much lower than that for estimation of Sobol' sensitivity
indices which later was confirmed in other works
\cite{Kiparis2009}. DGSM can be seen as a generalization and
formalization of the Morris importance measure also known as elementary
effects \cite{Morris1991}. Sobol' and Kucherenko$^{
}$\cite{Sobol2009} $^{ }$proved theoretically that there is a
link between DGSM and the Sobol' total sensitivity index $S_i^{tot} $ for
the same input. They showed that DGSM can be used as an upper bound on total
sensitivity index $S_i^{tot} $. They also introduced modified DGSM which can
be used for both a single input and groups of inputs
\cite{Sobol2010}. Such measures can be applied for problems
with a high number of input variables to reduce the computational time.
Lamboni \textit{et al} \cite{Lamboni2013} extended results of Sobol' and
Kucherenko for models with input variables belonging to the class of
Boltzmann probability measures.

The numerical efficiency of the DGSM method can be improved by using the
automatic differentiation algorithm for calculation DGSM as was shown in
\cite{Kiparis2009}. However, the number of required function
evaluations still remains to be proportional to the number of inputs. This
dependence can be greatly reduced using an approach based on algorithmic
differentiation in the adjoint or reverse mode \cite{Griewank2008}.
It allows estimating all derivatives at a cost at most 4-6 times of that for
evaluating the original function \cite{Jansen2014}.

This paper is organised as follows: Section 2 presents Sobol' global
sensitivity indices. DGSM and lower and upper bounds on\textbf{
}total Sobol' sensitivity indices for uniformly distributed variables and
random variables are presented in Sections 3 and 4, respectively. In Section
5 we consider test cases which illustrate an application of DGSM and their
links with total Sobol' sensitivity indices. Finally, conclusions are
presented in Section 6.

\section{Sobol' global sensitivity indices}\label{sec:2}

The method of global sensitivity indices developed by Sobol' is based on
ANOVA decomposition \cite{Sobol1993}. Consider the square
integrable function $f({\rm {\bf x}})$ defined in the unit hypercube
$H^d=[0,1]^d$. The decomposition of $f({\rm {\bf x}})$

\begin{equation}
\label{eq1}
f({\bf{x}})=f_0+\sum\limits_{i=1}^d {f_i(x_i )}+\sum\limits_{i=1}^d {\sum\limits_{j > i}^d {f_{ij}(x_i ,x_j )} } + \cdots + f_{12 \cdots d}(x_1,\cdots,x_d ),
\end{equation}

\noindent
where $f_0  = \int_{H^d } {f(x)dx}$ , is called ANOVA if conditions

\begin{equation}
\label{eq1ort}
\int_{H^d } {f_{i_1 ...i_s } dx_{i_k } }  = 0
\end{equation}

\noindent
are satisfied for all different groups of indices $x_1,\cdots,x_s$ such that $1 \le i_1  < i_2  < ... < i_s  \le d $. These conditions guarantee that all
terms in (\ref{eq1}) are mutually orthogonal with respect to integration.

The variances of the terms in the ANOVA decomposition add up to the total variance:

\[
D = \int_{H^d } {f^2 ({\bf{x}})d} {\bf{x}} - f_0^2  = \sum\limits_{s = 1}^d {\sum\limits_{i_1  <  \cdot  \cdot  \cdot  < i_s }^d {D_{i_1 ...i_s }^{} } },
\]

\noindent
where $D_{i_1 ...i_s }^{}  =\int_{H^d } {f_{i_1 ...i_s }^2 (x_{i_1 } ,...,x_{i_s } )dx_{i_1 } ,...,x_{i_s } }$ are called
partial variances.

Total partial variances account for the total influence of the factor $x_i$:

\[
D_i^{tot}  = \sum\limits_{ < i > } {D_{i_1 ...i_s } },
\]

\noindent
where the sum $\sum\limits_{ < i > } {} $ is extended over all different groups of indices $x_1,\cdots,x_s$ satisfying condition $1 \le i_1  < i_2  < ... < i_s  \le n $, $1 \le s \le n$, where one of the indices is equal to $i$. The corresponding total sensitivity index is defined as

\[
S_i^{tot}  = {{D_i^{tot} } \mathord{\left/
 {\vphantom {{D_i^{tot} } D}} \right.
 \kern-\nulldelimiterspace} D}.
\]

Denote $u_i ({\bf{x}})$ the sum of
all terms in ANOVA decomposition (\ref{eq1}) that depend on $x_i$:

\[
u_i ({\bf{x}}) = f_i (x_i ) + \sum\limits_{j = 1,j \ne i}^d {f_{ij} (x_i ,x_j )}  +  \cdots  + f_{12 \cdots d} (x_1 , \cdots ,x_d ).
\]

\noindent
From the definition of ANOVA decomposition it follows that

\begin{equation}
\label{eq2}
\int_{H^d } {u_i ({\bf{x}})d{\bf{x}}}  = 0.
\end{equation}

The total partial variance $D_i^{tot}$ can be computed as
\[
D_i^{tot}  = \int_{H^d } {u_i^2 ({\bf{x}})d{\bf{x}}}  = \int_{H^d } {u_i^2 (x_i ,{\bf{z}})dx_i d{\bf{z}}}.
\]

Denote ${\bf{z}} = (x_1 ,...,x_{i - 1} ,x_{i + 1} ,...,x_d )$ the vector
of all variables but $x_i$, then ${\bf{x}} \equiv (x_i ,{\bf{z}})$ and
$f({\bf{x}}) \equiv f(x_i ,{\bf{z}})$. The ANOVA
decomposition of $f({\rm {\bf x}})$ in
(\ref{eq1}) can be presented in the following form

\[
f({\bf{x}}) = u_i (x_i ,{\bf{z}}) + v({\bf{z}}),
\]

\noindent
where $v({\bf{z}})$ is the sum of terms independent of $x_i$. Because of
(\ref{eq1ort}) and  (\ref{eq2}) it is easy to show that
$v({\bf{z}}) = \int_{H^d } {f({\bf{x}})dx_i }$. Hence

\begin{equation}
\label{eq3}
u_i (x_i ,{\bf{z}}) = f({\bf{x}}) - \int_{H^d } {f({\bf{x}})dx_i }.
\end{equation}

Then the total sensitivity index $S_i^{tot}$ is equal to

\begin{equation}
\label{eq4}
S_i^{tot}  = \frac{{\int_{H^d } {u_i^2 ({\bf{x}})d{\bf{x}}} }}{D}.
\end{equation}

We note that in the case of independent random variables all definitions
of the ANOVA decomposition remain to be correct but all derivations
should be considered in probabilistic sense as shown in \cite{Sobol2005}
and presented in Section 4.

\section{DGSM for uniformly distributed variables}\label{sec:3}

Consider continuously differentiable function $f({\rm {\bf x}})$ defined in the unit hypercube $H^d=[0,1]^d$ such that ${{\partial f} \mathord{\left/
 {\vphantom {{\partial f} {\partial x_i }}} \right.
 \kern-\nulldelimiterspace} {\partial x_i }} \in L_2 $.

\begin{theorem}
Assume that $c \le \left| {\frac{{\partial f}}{{\partial x_i }}} \right| \le C$. Then

\begin{equation}
\label{eq5}
\frac{{c^2 }}{{12D}} \le S_i^{tot}  \le \frac{{C^2 }}{{12D}}.
\end{equation}

\end{theorem}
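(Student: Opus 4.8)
The plan is to bound $S_i^{tot}$ by controlling the quantity $\int_{H^d} u_i^2(\bsx)\,d\bsx$ that appears in the numerator of formula (\ref{eq4}). The key observation is that by (\ref{eq3}) we have $u_i(x_i,\bsz) = f(\bsx) - \int_0^1 f(\bsx)\,dx_i$, so for each fixed $\bsz$ the function $x_i \mapsto u_i(x_i,\bsz)$ is exactly $f$ minus its own average over $x_i$; that is, $u_i(\cdot,\bsz)$ has mean zero in $x_i$ on $[0,1]$. The natural strategy is therefore to work one-dimensionally: fix $\bsz$, set $g(x_i) = f(x_i,\bsz)$, and estimate $\int_0^1 \big(g(x_i) - \bar g\big)^2\,dx_i$ where $\bar g = \int_0^1 g\,dx_i$, using only the hypothesis $c \le |g'(x_i)| \le C$. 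Afterwards I would integrate the resulting pointwise bound over $\bsz$ and divide by $D$.

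First I would establish the one-dimensional inequality. Since $g'$ has constant sign on $[0,1]$ (its absolute value is bounded below by $c>0$, and $g'$ is continuous, so it cannot change sign), $g$ is strictly monotone. The cleanest route is a Poincar\'e-type / Wirtinger-type estimate for the variance of a mean-zero function in terms of its derivative. Writing $h(x_i) = g(x_i) - \bar g$, one has $\int_0^1 h\,dx_i = 0$, and I would bound $\int_0^1 h^2\,dx_i$ both above and below by $\int_0^1 (g')^2\,dx_i$ up to the sharp constant $1/12$. Concretely, the model case is the linear function $g(x_i) = a x_i + b$, for which $\int_0^1 (g - \bar g)^2\,dx_i = a^2/12$; the factor $1/12$ in (\ref{eq5}) is exactly the variance of the uniform distribution on $[0,1]$, which signals that the extremal configuration is linear in $x_i$. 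The upper estimate $\int_0^1 h^2\,dx_i \le \tfrac{1}{12}\int_0^1 (g')^2\,dx_i \le \tfrac{C^2}{12}$ follows from the Poincar\'e inequality on the interval with mean-zero boundary-free constant $1/\pi^2$... but that constant is too small, so instead I would use a representation of $h$ as an integral against a Green's-function-type kernel and exploit monotonicity, or alternatively expand in a suitable orthonormal basis. For the lower bound, monotonicity of $g$ gives $\int_0^1 h^2\,dx_i \ge \tfrac{1}{12}\,c^2$ by a direct rearrangement argument comparing $g$ against its linear interpolant.

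Carrying out the two one-dimensional bounds, I expect
\begin{equation}
\label{eq:onedim}
\frac{c^2}{12} \le \int_0^1 \Big(g(x_i) - \int_0^1 g\,dx_i\Big)^2 dx_i \le \frac{C^2}{12},
\end{equation}
holding for every fixed $\bsz$. Integrating (\ref{eq:onedim}) over $\bsz$ (which ranges over a set of total measure one) gives $c^2/12 \le \int_{H^d} u_i^2(\bsx)\,d\bsx \le C^2/12$, and dividing by $D$ yields (\ref{eq5}). The main obstacle I anticipate is pinning down the sharp constant $1/12$ on both sides simultaneously from the hypothesis $c \le |g'| \le C$ alone: the upper bound is not a generic Poincar\'e inequality (whose constant would be $1/\pi^2 < 1/12$) but rather requires using the pointwise bound $|g'| \le C$ everywhere rather than just in the $L^2$ average, and the lower bound requires the monotonicity-and-rearrangement comparison against the linear extremizer. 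Verifying that the linear function is genuinely the extremal case in each direction — and that no mean-zero function with $|g'|$ constrained can escape these bounds — is the delicate step; once that sharp one-dimensional variance estimate is secured, the reduction to it and the final integration over $\bsz$ are routine.
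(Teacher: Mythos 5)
Your reduction is sound and is the right skeleton: fixing $\bsz$, noting via (\ref{eq3}) that $u_i(\cdot,\bsz)$ is $g(x_i)=f(x_i,\bsz)$ minus its $x_i$-average, proving a sharp one-dimensional variance bound, then integrating over $\bsz$ and dividing by $D$. (Note the paper itself gives no proof but defers to \cite{Sobol2009}, whose argument follows exactly this reduction.) The genuine gap is that the one-dimensional estimate $c^2/12 \le \int_0^1 (g-\bar g)^2\,dx_i \le C^2/12$ is the \emph{entire} content of the theorem, and you never prove it: for the upper bound you name candidate techniques (a Green's-function kernel, a basis expansion) without executing any, for the lower bound you invoke an unspecified rearrangement, and you explicitly concede that verifying linear extremality ``is the delicate step.'' Moreover your Poincar\'e discussion is backwards: since $\pi^2<12$ one has $1/\pi^2 > 1/12$, not ``$1/\pi^2<1/12$'' as written; the generic mean-zero Poincar\'e bound yields only $S_i^{tot}\le C^2/(\pi^2 D)$ (this is just (\ref{eq24}) with $\nu_i\le C^2$), strictly weaker than $C^2/(12D)$, and your intermediate claim $\int_0^1 h^2\,dx_i \le \frac{1}{12}\int_0^1 (g')^2\,dx_i$ for mean-zero $h$ is false in general (take $h=\cos \pi x_i$, where the ratio is $1/\pi^2$). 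Your instinct that the pointwise bounds must be used is correct, but the proposal never shows how.

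The missing idea is elementary and closes the gap in three lines. Since $f$ is continuously differentiable and $|g'|\ge c$, $g'$ cannot change sign (as you observed), so $c\,|x-t| \le |g(x)-g(t)| \le C\,|x-t|$ for all $x,t\in[0,1]$; the upper bound is mere Lipschitz continuity, only the lower one uses monotonicity, and for $c=0$ the lower bound is trivial anyway. Now apply the classical variance identity
\[
\int_0^1 \left( g-\bar g \right)^2 dx
= \frac{1}{2}\int_0^1 \int_0^1 \left( g(x)-g(t) \right)^2 dx\, dt ,
\]
square the two-sided increment bound, and use $\int_0^1\int_0^1 (x-t)^2\,dx\,dt = 1/6$ to get $c^2/12 \le \int_0^1 (g-\bar g)^2\,dx \le C^2/12$ at every fixed $\bsz$, with equality exactly when $g'$ is constant --- confirming your guess that the linear function is extremal and explaining why the constant is the variance $1/12$ of $U[0,1]$. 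Integrating over $\bsz$ and dividing by $D$ then gives (\ref{eq5}), essentially as in \cite{Sobol2009}. With this lemma inserted, your outline becomes a complete proof; without it, it is a correct reduction plus an acknowledged hole at the decisive step.
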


\noindent
The proof is presented in \cite{Sobol2009}.

The Morris importance measure also known as elementary
effects originally defined as finite differences averaged over
a finite set of random points \cite{Morris1991} was generalized in  \cite{Kuch2009}:

\begin{equation}
\label{eq6'}
\mu_i  = \int_{H^d }  \left| {\frac{{\partial f({\bf{x}})}}{{\partial x_i }}} \right| d{\bf{x}}.
\end{equation}

\noindent
Kucherenko \textit{et al } \cite{Kuch2009} also introduced a new DGSM
measure:

\begin{equation}
\label{eq6}
\nu _i  = \int_{H^d } {\left( {\frac{{\partial f({\bf{x}})}}{{\partial x_i }}} \right)} ^2 d{\bf{x}}.
\end{equation}

\noindent
In this paper we define two new DGSM measures:

\begin{equation}
\label{eq7}
w_i^{(m)}  = \int_{H^d } {x_i^m \frac{{\partial f({\bf{x}})}}{{\partial x_i }}d} {\bf{x}},
\end{equation}

\noindent
where $m$ is a constant, $m > 0$,

\begin{equation}
\label{eq8}
\varsigma _i  = \frac{1}{2}\int_{H^d } {x_i (1 - x_i )\left( {\frac{{\partial f({\bf{x}})}}{{\partial x_i }}} \right)^2 d} {\bf{x}}.
\end{equation}

We note that $\nu _i $ is in
fact the mean value of $\left( {{{\partial f} \mathord{\left/
 {\vphantom {{\partial f} {\partial x_i }}} \right.
 \kern-\nulldelimiterspace} {\partial x_i }}} \right)^2 $. We also note that

\begin{equation}
\label{eq9}
\frac{{\partial f}}{{\partial x_i }} = \frac{{\partial u_i}}{{\partial x_i }}.
\end{equation}

\subsection{{Lower bounds on $S_i^{tot}$}}

\begin{theorem}

There exists the following lower bound between DGSM (\ref{eq6})
and the Sobol' total sensitivity index

\begin{equation}
\label{eq10}
\frac{{\left( {\int_{H^d } {\left[ {f\left( {1,{\bf{z}}} \right) - f\left( {0,{\bf{z}}} \right)} \right]\left[ {f\left( {1,{\bf{z}}} \right) + f\left( {0,{\bf{z}}} \right) - 2f\left( {\bf{x}} \right)} \right]d} {\bf{x}}} \right)^2 }}{{4\nu _i D}} < S_i^{tot}.
\end{equation}

\end{theorem}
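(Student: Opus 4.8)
The plan is to derive the lower bound from the Cauchy--Schwarz inequality applied to the pair $u_i$ and $\partial u_i/\partial x_i$, after recognising that the bracketed integral in the statement is exactly twice the cross term $\int_{H^d} u_i\,(\partial u_i/\partial x_i)\,d{\bf x}$.

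First I would recall the two ingredients supplied earlier: from (\ref{eq4}) we have $S_i^{tot}=D_i^{tot}/D$ with $D_i^{tot}=\int_{H^d}u_i^2\,d{\bf x}$, and from (\ref{eq9}) we have $\partial f/\partial x_i=\partial u_i/\partial x_i$, so that $\nu_i=\int_{H^d}(\partial u_i/\partial x_i)^2\,d{\bf x}$. The Cauchy--Schwarz inequality then gives
\[
\left(\int_{H^d} u_i\,\frac{\partial u_i}{\partial x_i}\,d{\bf x}\right)^2 \le \left(\int_{H^d} u_i^2\,d{\bf x}\right)\left(\int_{H^d}\left(\frac{\partial u_i}{\partial x_i}\right)^2 d{\bf x}\right) = D_i^{tot}\,\nu_i,
\]
so it suffices to identify the left-hand cross term with half the numerator integral in (\ref{eq10}).

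The main computational step is to evaluate $\int_{H^d} u_i\,(\partial u_i/\partial x_i)\,d{\bf x}$. Writing $u_i\,\partial u_i/\partial x_i=\tfrac12\,\partial(u_i^2)/\partial x_i$ and integrating in $x_i\in[0,1]$ by the fundamental theorem of calculus, this equals $\tfrac12\int_{H^{d-1}}[\,u_i^2(1,{\bf z})-u_i^2(0,{\bf z})\,]\,d{\bf z}$. I would then substitute $u_i(x_i,{\bf z})=f(x_i,{\bf z})-\int_0^1 f(x_i,{\bf z})\,dx_i$ from (\ref{eq3}) at $x_i=0$ and $x_i=1$: the conditional mean cancels in the difference, giving $u_i(1,{\bf z})-u_i(0,{\bf z})=f(1,{\bf z})-f(0,{\bf z})$, while the sum gives $u_i(1,{\bf z})+u_i(0,{\bf z})=f(1,{\bf z})+f(0,{\bf z})-2\int_0^1 f(x_i,{\bf z})\,dx_i$. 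Factoring the difference of squares and pulling the $x_i$-independent factor $f(1,{\bf z})-f(0,{\bf z})$ back inside the $x_i$-integral reconstitutes precisely the integrand of (\ref{eq10}); hence $\int_{H^d} u_i\,(\partial u_i/\partial x_i)\,d{\bf x}=\tfrac12 N$, where $N$ denotes the numerator integral.

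Combining the two displays yields $N^2/4\le D_i^{tot}\,\nu_i$, that is $S_i^{tot}=D_i^{tot}/D\ge N^2/(4\nu_i D)$, the asserted inequality. The one delicate point is the strictness: equality in Cauchy--Schwarz would force $u_i$ and $\partial u_i/\partial x_i$ to be proportional over $H^d$, i.e.\ $u_i(x_i,{\bf z})=A({\bf z})\,e^{x_i/\lambda}$ for a single constant $\lambda$, which is incompatible with the zero-mean property $\int_0^1 u_i(x_i,{\bf z})\,dx_i=0$ implied by (\ref{eq3}) unless $u_i\equiv 0$. I therefore expect to spend most care verifying that the inequality is strict whenever $D_i^{tot}>0$ (the degenerate case $D_i^{tot}=0$ making both sides vanish).
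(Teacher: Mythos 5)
Your proof is correct and follows essentially the same route as the paper's: the same Cauchy--Schwarz inequality applied to $u_i$ and $\partial u_i /\partial x_i$, the same evaluation of the cross term via $\tfrac{1}{2}\,\partial (u_i^2)/\partial x_i$ and the boundary values $u_i(1,{\bf z})$, $u_i(0,{\bf z})$ rewritten through $f$ using (\ref{eq3}), and the same exponential-solution argument ruling out equality via the zero-mean condition (\ref{eq2}). Your explicit attention to the degenerate case $u_i \equiv 0$ is slightly more careful than the paper, which glosses over it, but the argument is otherwise identical.
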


\begin{proof}
Consider an integral

\begin{equation}
\label{eq11}
\int_{H^d } {u_i ({\bf{x}})\frac{{\partial u_i ({\bf{x}})}}{{\partial x_i }}} d{\bf{x}}.
\end{equation}

Applying the Cauchy--Schwarz inequality we obtain the following result:

\begin{equation}
\label{eq12}
\left( {\int_{H^d } {u_i ({\bf{x}})\frac{{\partial u_i ({\bf{x}})}}{{\partial x_i }}} d{\bf{x}}} \right)^2  \le \int_{H^d } {u_i^2 ({\bf{x}})} d{\bf{x}} \cdot \int_{H^d } {\left( {\frac{{\partial u_i ({\bf{x}})}}{{\partial x_i }}} \right)} ^2 d{\bf{x}}.
\end{equation}

It is easy to prove that the left and right parts of this inequality cannot
be equal. Indeed, for them to be equal functions $u_i ({\bf{x}})$ and $\frac{{\partial u_i ({\bf{x}})}}{{\partial x_i }}$
 should be linearly dependent. For simplicity
consider a one-dimensional case: $x \in [0,1]$. Let's assume

\[
\frac{{\partial u(x)}}{{\partial x}} = Au(x),
\]

\noindent
where $A$ is a constant. The general solution to this equation $u(x) = B\exp (Ax)$
, where $B$ is a constant. It is easy to see that this
solution is not consistent with condition (\ref{eq2}) which should be imposed on
function $u(x)$.

Integral $\int_{H^d } {u_i ({\bf{x}})\frac{{\partial u_i ({\bf{x}})}}{{\partial x_i }}} d{\bf{x}}$ can be transformed as

\begin{equation}
\label{eq13}
\begin{array}{l}
 \int_{H^d } {u_i ({\bf{x}})\frac{{\partial u_i ({\bf{x}})}}{{\partial x_i }}} d{\bf{x}} = \frac{1}{2}\int_{H^d } {\frac{{\partial u_i^2 ({\bf{x}})}}{{\partial x_i }}} d{\bf{x}} \\
  = \frac{1}{2}\int_{H^{d - 1} } {\left( {u_i^2 (1,{\bf{z}}) - u_i^2 (0,{\bf{z}})} \right)d{\bf{z}}}  \\
  = \frac{1}{2}\int_{H^{d - 1} } {\left( {u_i (1,{\bf{z}}) - u_i (0,{\bf{z}})} \right)\left( {u_i (1,{\bf{z}}) + u_i (0,{\bf{z}})} \right)d{\bf{z}}}  \\
  = \frac{1}{2}\int_{H^d } {\left( {f(1,{\bf{z}}) - f(0,{\bf{z}})} \right)\left( {f(1,{\bf{z}}) + f(0,{\bf{z}}) - 2v({\bf{z}})} \right)d{\bf{z}}.}  \\
 \end{array}
\end{equation}

All terms in the last integrand are independent of $x_i$, hence we can replace integration with
respect to $d{\bf{z}}$ to integration with respect to $d{\bf{x}}$ and substitute $v({\bf{z}})$ for $f( {\bf{x}} )$ in
the integrand due to condition (\ref{eq2}). Then (\ref{eq13}) can be presented as

\begin{equation}
\label{eq14}
\int_{H^d } {u_i ({\bf{x}})\frac{{\partial u_i ({\bf{x}})}}{{\partial x_i }}} d{\bf{x}} = \frac{1}{2}\int_{H^d } {\left[ {f\left( {1,{\bf{z}}} \right) - f\left( {0,{\bf{z}}} \right)} \right]\left[ {f\left( {1,{\bf{z}}} \right) + f\left( {0,{\bf{z}}} \right) - 2f\left( {\bf{x}} \right)} \right]d} {\bf{x}}
\end{equation}

From (\ref{eq9}) $\frac{{\partial u_i ({\bf{x}})}}{{\partial x_i }} = \frac{{\partial f({\bf{x}})}}{{\partial x_i }}$, hence
the right hand side of (\ref{eq12}) can be written as $\nu _i D_i^{tot} $.
Finally dividing (\ref{eq12}) by $\nu _i D$ and using (\ref{eq14}), we obtain the lower bound (\ref{eq10}).
  \qed
\end{proof}

We call
\[
\frac{{\left( {\int_{H^d } {\left[ {f\left( {1,{\bf{z}}} \right) - f\left( {0,{\bf{z}}} \right)} \right]\left[ {f\left( {1,{\bf{z}}} \right) + f\left( {0,{\bf{z}}} \right) - 2f\left( {\bf{x}} \right)} \right]d} {\bf{x}}} \right)^2 }}{{4\nu _i D}}
\]
\noindent
the lower bound number one (LB1).

\begin{theorem}

There exists the following lower bound between DGSM (\ref{eq7})
and the Sobol' total sensitivity index

\begin{equation}
\label{eq15}
\frac{{(2m + 1)\left[ {\int_{H^d } {\left( {f(1,{\bf{z}}) - f({\bf{x}})} \right)d{\bf{x}}}  - w_i^{(m + 1)} } \right]^2 }}{{(m + 1)^2 D}} < S_i^{tot}
\end{equation}

\end{theorem}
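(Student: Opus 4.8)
The plan is to reproduce the Cauchy--Schwarz strategy used in the proof of the preceding theorem, but with the weight $u_i$ replaced by the monomial $x_i^m$. Concretely, I would start from the single-factor Cauchy--Schwarz inequality
\begin{equation*}
\left(\int_{H^d} x_i^m u_i(\mathbf{x})\,d\mathbf{x}\right)^2 \le \int_{H^d} x_i^{2m}\,d\mathbf{x}\cdot\int_{H^d} u_i^2(\mathbf{x})\,d\mathbf{x}.
\end{equation*}
The first factor on the right is elementary, $\int_{H^d} x_i^{2m}\,d\mathbf{x} = 1/(2m+1)$, while the second is exactly $D_i^{tot}$ by the definition of the total partial variance. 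As before, the inequality is strict whenever $u_i$ is not identically zero: equality would force $u_i(\mathbf{x}) = c\,x_i^m$, but then $\int_{H^d} u_i\,d\mathbf{x} = c/(m+1)$, which violates the orthogonality condition (\ref{eq2}) unless $c=0$. Hence $(2m+1)\left(\int_{H^d} x_i^m u_i\,d\mathbf{x}\right)^2 < D_i^{tot}$.

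The heart of the argument is to identify the bracket appearing in (\ref{eq15}) with $(m+1)\int_{H^d} x_i^m u_i\,d\mathbf{x}$. To this end I would first rewrite $w_i^{(m+1)}$ via (\ref{eq9}) as $\int_{H^d} x_i^{m+1}\,\partial u_i/\partial x_i\,d\mathbf{x}$ and integrate by parts in $x_i$. Since $x_i^{m+1}$ vanishes at $x_i=0$, the boundary contribution reduces to $\int_{H^{d-1}} u_i(1,\mathbf{z})\,d\mathbf{z}$, yielding $w_i^{(m+1)} = \int_{H^{d-1}} u_i(1,\mathbf{z})\,d\mathbf{z} - (m+1)\int_{H^d} x_i^m u_i\,d\mathbf{x}$. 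On the other hand, writing $f(\mathbf{x}) = u_i(x_i,\mathbf{z}) + v(\mathbf{z})$ gives $f(1,\mathbf{z}) - f(\mathbf{x}) = u_i(1,\mathbf{z}) - u_i(\mathbf{x})$, and integrating over $H^d$ while invoking (\ref{eq2}) collapses $\int_{H^d}\left(f(1,\mathbf{z}) - f(\mathbf{x})\right)d\mathbf{x}$ to $\int_{H^{d-1}} u_i(1,\mathbf{z})\,d\mathbf{z}$. Subtracting the two identities cancels the boundary terms and leaves precisely $(m+1)\int_{H^d} x_i^m u_i\,d\mathbf{x}$.

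Combining these two pieces, the squared bracket equals $(m+1)^2\left(\int_{H^d} x_i^m u_i\,d\mathbf{x}\right)^2$, so the strict Cauchy--Schwarz estimate gives $(2m+1)\left[\cdots\right]^2 < (m+1)^2 D_i^{tot}$. Dividing both sides by $(m+1)^2 D$ and using $S_i^{tot} = D_i^{tot}/D$ then produces exactly the bound (\ref{eq15}).

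I expect the only delicate point to be the integration-by-parts bookkeeping: tracking the boundary term at $x_i=1$ (and its vanishing at $x_i=0$), and then verifying that it cancels against the $\int_{H^{d-1}} u_i(1,\mathbf{z})\,d\mathbf{z}$ arising from the difference $f(1,\mathbf{z})-f(\mathbf{x})$. The remaining ingredients — the Cauchy--Schwarz step, the moment $\int_0^1 x_i^{2m}\,dx_i$, and the strictness argument — are routine and parallel the proof of the previous theorem.
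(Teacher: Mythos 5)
Your proposal is correct and follows essentially the same route as the paper: the Cauchy--Schwarz bound on $\int_{H^d} x_i^m u_i\,d\mathbf{x}$ with the moment $\int_{H^d} x_i^{2m}\,d\mathbf{x} = 1/(2m+1)$, the strictness argument via incompatibility of $u_i = A x_i^m$ with condition (\ref{eq2}), and the identification of the bracket through the product rule for $\partial(x_i^{m+1}u_i)/\partial x_i$ --- which is exactly your integration by parts --- together with $\int_{H^{d-1}} u_i(1,\mathbf{z})\,d\mathbf{z} = \int_{H^d}\bigl(f(1,\mathbf{z}) - f(\mathbf{x})\bigr)d\mathbf{x}$. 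Your bookkeeping of the boundary terms matches the paper's transformation (\ref{eq18}), so no gaps remain.
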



\begin{proof}
 Consider an integral

\begin{equation}
\label{eq16}
\int_{H^d } {x_i^m u_i ({\bf{x}})d} {\bf{x}}.
\end{equation}

Applying the Cauchy--Schwarz inequality we obtain the following result:

\begin{equation}
\label{eq17}
\left( {\int_{H^d } {x_i^m u_i ({\bf{x}})d} {\bf{x}}} \right)^2  \le \int_{H^d } {x_i^{2m} d} {\bf{x}} \cdot \int_{H^d } {u_i^2 ({\bf{x}})d} {\bf{x}}.
\end{equation}

It is easy to see that equality in (\ref{eq17}) cannot be attained. For this to
happen functions $u_i ({\bf{x}})$ and $x_i^m$ should be
linearly dependent. For simplicity consider a one-dimensional case:
$x \in [0,1]$. Let's assume

\[
u(x) = Ax^m ,
\]

\noindent
where $A\neq0$ is a constant. This solution does not satisfy condition (\ref{eq2}) which should be
imposed on function $u(x)$.

Further we use the following transformation:

\[
\int_{H^d } {\frac{{\partial \left( {x_i^{m + 1} u_i ({\bf{x}})} \right)}}{{\partial x_i }}d} {\bf{x}} = (m + 1)\int_{H^d } {x_i^m u_i ({\bf{x}})d} {\bf{x}} + \int_{H^d } {x_i^{m + 1} \frac{{\partial u_i ({\bf{x}})}}{{\partial x_i }}d} {\bf{x}}
\]

\noindent
to present integral (\ref{eq16}) in a form:

\begin{equation}
\label{eq18}
\begin{array}{l}
 \int_{H^d } {x_i^m u_i ({\bf{x}})d} {\bf{x}} = \frac{1}{{m + 1}}\left[ {\int_{H^d } {\frac{{\partial \left( {x_i^{m + 1} u_i ({\bf{x}})} \right)}}{{\partial x_i }}d} {\bf{x}} - \int_{H^d } {x_i^{m + 1} \frac{{\partial u_i ({\bf{x}})}}{{\partial x_i }}d} {\bf{x}}} \right] \\
  = \frac{1}{{m + 1}}\left[ {\int_{H^{d - 1} } {u_i (1,{\bf{z}})d} {\bf{z}} - \int_{H^d } {x_i^{m + 1} \frac{{\partial u_i ({\bf{x}})}}{{\partial x_i }}d} {\bf{x}}} \right] \\
  = \frac{1}{{m + 1}}\left[ {\int_{H^d } {\left( {f(1,{\bf{z}}) - f({\bf{x}})} \right)d{\bf{x}}}  - \int_{H^d } {x_i^{m + 1} \frac{{\partial u_i ({\bf{x}})}}{{\partial x_i }}d} {\bf{x}}} \right]. \\
 \end{array}
\end{equation}

\noindent
We notice that

\begin{equation}
\label{eq19}
\int_{H^d } {x_i^{2m} d} {\bf{x}} = \frac{1}{{(2m + 1)}}.
\end{equation}

Using (\ref{eq18}) and (\ref{eq19}) and dividing (\ref{eq17}) by $D$ we obtain (\ref{eq15}).
\qed

\end{proof}

This second lower bound on $S_i^{tot}$ we denote $\gamma (m)$:

\begin{equation}
\label{eq20}
\gamma (m) = \frac{{(2m + 1)\left[ {\int_{H^d } {\left( {f(1,{\bf{z}}) - f({\bf{x}})} \right)d{\bf{x}}}  - w_i^{(m + 1)} } \right]^2 }}{{(m + 1)^2 D}} < S_i^{tot}.
\end{equation}

In fact, this is a set of lower bounds depending on parameter $m$. We are
interested in the value of $m$ at which $\gamma (m)$ attains its maximum. Further we use star to denote
such a value $m$: $m^*  = \arg _{} \max (\gamma (m))$ and call

\begin{equation}
\label{eq21}
\gamma ^* (m^* ) = \frac{{(2m^*  + 1)\left[ {\int_{H^d } {\left( {f(1,{\bf{z}}) - f({\bf{x}})} \right)d{\bf{x}}}  - w_i^{(m^*  + 1)} } \right]^2 }}{{(m^*  + 1)^2 D}}
\end{equation}

\noindent
the lower bound number two (LB2).

We define the maximum lower bound $LB^* $ as

\begin{equation}
\label{eq22}
LB^*=max(LB1, LB2).
\end{equation}

We note that both lower and upper bounds can be estimated by a set of
derivative based measures:

\begin{equation}
\label{eq23}
\Upsilon _i  = \{ \nu _i ,w_i^{(m)} \} ,{\rm{  }}m > 0.
\end{equation}

\subsection{{Upper bounds on $S_i^{tot}$}}

\begin{theorem}

\begin{equation}
\label{eq24}
S_i^{tot}  \le \frac{{\nu _i }}{{\pi ^2 D}}.
\end{equation}

\end{theorem}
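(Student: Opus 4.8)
The plan is to reduce the bound to a one-dimensional Poincaré (Wirtinger) inequality applied fibre-wise in the variable $x_i$. The essential ingredient is the following sharp inequality: if $g \in L_2[0,1]$ has $g' \in L_2[0,1]$ and satisfies the mean-zero condition $\int_0^1 g(t)\,dt = 0$, then
\begin{equation}
\int_0^1 g^2(t)\,dt \le \frac{1}{\pi^2}\int_0^1 \left(g'(t)\right)^2\,dt .
\end{equation}
The constant $1/\pi^2$ is optimal, being the reciprocal of the smallest nonzero eigenvalue of $-\mathrm{d}^2/\mathrm{d}t^2$ on $[0,1]$ under Neumann boundary conditions.

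First I would establish this one-dimensional inequality, which is the main obstacle. I would expand $g$ in the cosine basis $\{\cos(k\pi t)\}_{k\ge 0}$, which is complete and orthogonal in $L_2[0,1]$; the mean-zero hypothesis forces the coefficient of the constant mode to vanish, so $g(t) = \sum_{k\ge 1} a_k \cos(k\pi t)$. By Parseval's identity, $\int_0^1 g^2\,dt = \tfrac12\sum_{k\ge 1} a_k^2$ and $\int_0^1 (g')^2\,dt = \tfrac12\sum_{k\ge 1}(k\pi)^2 a_k^2$. Since $(k\pi)^2 \ge \pi^2$ for every $k \ge 1$, the second sum is at least $\pi^2$ times the first, which yields the inequality; equality would require all modes except $k=1$ to vanish, i.e. $g \propto \cos(\pi t)$.

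Next I would apply this fibre-wise. For fixed $\mathbf{z} \in H^{d-1}$ set $g(x_i) = u_i(x_i,\mathbf{z})$. By (\ref{eq3}) we have $\int_0^1 u_i(x_i,\mathbf{z})\,dx_i = 0$ for every $\mathbf{z}$, so the mean-zero hypothesis is met, and the one-dimensional inequality gives
\begin{equation}
\int_0^1 u_i^2(x_i,\mathbf{z})\,dx_i \le \frac{1}{\pi^2}\int_0^1 \left(\frac{\partial u_i(\mathbf{x})}{\partial x_i}\right)^2 dx_i .
\end{equation}

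Finally I would integrate over $\mathbf{z} \in H^{d-1}$. The left-hand side becomes $\int_{H^d} u_i^2\,d\mathbf{x} = D_i^{tot}$, while by (\ref{eq9}) the right-hand side becomes $\tfrac{1}{\pi^2}\int_{H^d}(\partial f/\partial x_i)^2\,d\mathbf{x} = \nu_i/\pi^2$. Hence $D_i^{tot} \le \nu_i/\pi^2$, and dividing by $D$ gives $S_i^{tot} \le \nu_i/(\pi^2 D)$, as claimed. The only delicate point beyond the sharp constant is justifying term-by-term differentiation of the cosine series, which is controlled by the standing assumption $\partial f/\partial x_i \in L_2$.
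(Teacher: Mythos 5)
Your proof is correct and is essentially the argument behind this theorem: the paper itself gives no in-text proof, deferring to Sobol' and Kucherenko \cite{Sobol2009}, where the bound is obtained in exactly your way --- the sharp one-dimensional Poincar\'e (Wirtinger) inequality with constant $1/\pi^2$ applied fibre-wise to $u_i(\cdot,{\bf z})$, whose mean-zero property in $x_i$ for each fixed ${\bf z}$ follows from (\ref{eq3}), then integrated over ${\bf z}$ and combined with (\ref{eq9}) to give $D_i^{tot} \le \nu_i/\pi^2$. The only difference is cosmetic: you derive the one-dimensional inequality from scratch via the cosine expansion and Parseval (with the term-by-term differentiation justified by integration by parts against $\sin(k\pi t)$, the boundary terms vanishing), whereas the cited proof takes it ready-made from Hardy, Littlewood and P\'olya \cite{Hardy1973}, just as the present paper does for the UB2 bound via the weighted inequality (\ref{eq26}).
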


\noindent The proof of this Theorem in given in \cite{Sobol2009}.

Consider the set of values $\nu _1 ,...,\nu _n $, $1 \le i \le n$. One can
expect that smaller $\nu _i$ correspond to less influential variables $x_i$.

We further call (\ref{eq24}) the upper bound number one (UB1).

\begin{theorem}

\begin{equation}
\label{eq25}
S_i^{tot}  \le \frac{{\varsigma _i }}{D},
\end{equation}
\end{theorem}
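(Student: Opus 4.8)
The plan is to reduce the $d$-dimensional bound to a one-dimensional weighted Poincar\'e inequality, and then to establish the sharp constant $\tfrac12$ by diagonalizing both sides in the shifted Legendre polynomial basis.

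First I would rewrite the target. By (\ref{eq4}) we have $S_i^{tot} = \frac{1}{D}\int_{H^d} u_i^2(\mathbf{x})\,d\mathbf{x}$, and by (\ref{eq9}) the derivative of $f$ in $x_i$ coincides with that of $u_i$. Hence (\ref{eq25}) is equivalent to
\[
\int_{H^d} u_i^2(\mathbf{x})\,d\mathbf{x} \;\le\; \frac{1}{2}\int_{H^d} x_i(1-x_i)\left(\frac{\partial u_i(\mathbf{x})}{\partial x_i}\right)^2 d\mathbf{x}.
\]
Since both integrands involve $x_i$ only through a one-variable expression followed by integration over $\mathbf{z}$, it is enough to prove, for each fixed $\mathbf{z}$, the scalar inequality
\[
\int_0^1 g^2(x)\,dx \;\le\; \frac{1}{2}\int_0^1 x(1-x)\,(g'(x))^2\,dx, \qquad g(x) := u_i(x,\mathbf{z}),
\]
and then integrate over $\mathbf{z}\in H^{d-1}$. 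The structural ingredient I would verify first, directly from (\ref{eq3}), is that $g$ has zero mean: $u_i(x,\mathbf{z}) = f(x,\mathbf{z}) - v(\mathbf{z})$ gives $\int_0^1 u_i(x,\mathbf{z})\,dx = 0$ for every $\mathbf{z}$.

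Next I would diagonalize both sides using the shifted Legendre polynomials $\{\tilde P_n\}_{n\ge 0}$, which are orthogonal on $[0,1]$ with $\int_0^1 \tilde P_n \tilde P_m\,dx = \delta_{nm}/(2n+1)$ and satisfy the Sturm--Liouville equation $-\bigl(x(1-x)\tilde P_n'\bigr)' = n(n+1)\,\tilde P_n$. Writing $g = \sum_{n\ge 0} a_n \tilde P_n$, the zero-mean condition kills the constant mode, so $a_0 = 0$ and $\int_0^1 g^2 = \sum_{n\ge 1} a_n^2/(2n+1)$. For the weighted Dirichlet form, integration by parts together with the vanishing of the weight $x(1-x)$ at both endpoints removes all boundary terms, yielding $\int_0^1 x(1-x)\,\tilde P_n'\tilde P_m'\,dx = n(n+1)\,\delta_{nm}/(2n+1)$. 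Therefore
\[
\frac{1}{2}\int_0^1 x(1-x)\,(g')^2\,dx = \sum_{n\ge 1} \frac{n(n+1)}{2}\cdot \frac{a_n^2}{2n+1}.
\]

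The inequality then follows term by term: for every $n\ge 1$ one has $n(n+1)/2 \ge 1$, with equality at $n=1$, so each summand on the right dominates the corresponding summand of $\int_0^1 g^2$. Integrating the scalar estimate over $\mathbf{z}$ and dividing by $D$ gives (\ref{eq25}). The main obstacle is pinning down the sharp factor $\tfrac12$, which amounts to identifying $\lambda_1 = 1\cdot 2 = 2$ as the smallest nonzero eigenvalue of $g\mapsto -\bigl(x(1-x)g'\bigr)'$ on zero-mean functions; equivalently, the elementary bound $n(n+1)\ge 2$ for $n\ge 1$. I would also take care to justify the integration-by-parts step for general $g$ with $\partial f/\partial x_i \in L_2$, where it is precisely the degeneracy of $x(1-x)$ at $x=0$ and $x=1$ that makes the boundary contributions vanish without imposing any boundary condition on $g$.
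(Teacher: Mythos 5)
Your proof is correct, but it takes a genuinely different route from the paper's. The paper disposes of the theorem in two lines: it quotes the classical inequality (\ref{eq26}) from Hardy--Littlewood--P\'olya, $0 \le \int_0^1 u^2\,dx - \left(\int_0^1 u\,dx\right)^2 \le \frac{1}{2}\int_0^1 x(1-x)\,u'^2\,dx$, applies it in the variable $x_i$ to $u_i(\cdot,\mathbf{z})$, whose $x_i$-mean vanishes, and integrates over $\mathbf{z}$ --- exactly your Fubini reduction, but with the one-dimensional inequality taken off the shelf. You instead prove that inequality from scratch by diagonalizing the degenerate Sturm--Liouville operator $g \mapsto -\left(x(1-x)g'\right)'$ in the shifted Legendre basis; your identities are all correct ($-\left(x(1-x)\tilde P_n'\right)' = n(n+1)\tilde P_n$, $\int_0^1 \tilde P_n^2\,dx = 1/(2n+1)$, boundary terms killed by the vanishing weight), and the constant $\tfrac12$ is indeed $1/\lambda_1$ with $\lambda_1 = 2$. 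The one step you rightly flag --- justifying the term-by-term expansion of the weighted Dirichlet form for general $g$ --- is most cleanly closed by Bessel's inequality rather than by completeness: in $L^2$ with weight $w = x(1-x)$ the family $\{\tilde P_n'\}_{n\ge 1}$ is orthogonal with $\int_0^1 w\,(\tilde P_n')^2\,dx = n(n+1)/(2n+1)$, and integration by parts gives $\int_0^1 w\,g'\,\tilde P_n'\,dx = n(n+1)a_n/(2n+1)$, so Bessel yields $\int_0^1 w\,(g')^2\,dx \ge \sum_{n\ge 1} n(n+1)a_n^2/(2n+1)$, which is the only direction you need. Your approach buys two things the citation does not: it is self-contained, and it pins down the equality case exactly --- equality holds iff $a_n = 0$ for all $n \ge 2$, i.e. iff $u_i$ is linear in $x_i$, since the $n=1$ term contributes equally to both sides ($1\cdot 2/2 = 1$). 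This is in fact sharper than the paper's remark after (\ref{eq26}) that equality requires $u$ constant, and it is corroborated by Example 1, where $f$ is linear in $x_i$ and UB2 coincides with $S_i^{tot}$ exactly. The paper's approach, in exchange, buys brevity and an authoritative reference for the sharp constant.
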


\noindent
where $\varsigma _i$ is given by (\ref{eq8}).

\begin{proof}

We use the following inequality \cite{Hardy1973}:

\begin{equation}
\label{eq26}
0 \le \int_0^1 {u^2 dx}  - \left( {\int_0^1 {udx} } \right)^2  \le \frac{1}{2}\int_0^1 {x(1 - x)u'^2 dx}.
\end{equation}

\noindent
The inequality is reduced to an equality only if $u$ is constant. Assume that
$u$ is given by (\ref{eq2}), then $\int_0^1 {udx}  = 0$, and from (\ref{eq26})
we obtain (\ref{eq25}).
\qed
\end{proof}

Further we call $\frac{{\varsigma _i }}{D}$ the
upper bound number two (UB2). We note that $\frac{1}{2}x_i (1 - x_i )$ for
$0 \le x_i  \le 1$ is bounded: $0 \le \frac{1}{2}x_i (1 - x_i ) \le \frac{1}{8}$. Therefore, $0 \le \varsigma _i  \le \frac{1}{8}\nu _i$.

\subsection{{Computational costs}}

All DGSM can be computed using the same set of partial derivatives
$\frac{{\partial f(x)}}{{\partial x_i }},{\rm{ }}i = 1,...,d$. Evaluation of
$\frac{{\partial f(x)}}{{\partial x_i }}$ can be done
analytically for explicitly given easily-differentiable functions or
numerically.

In the case of straightforward numerical estimations of all partial
derivatives and computation of integrals using MC or QMC methods, the
number of required function evaluations for a set of all input variables is
equal to $N(d+1)$, where $N$ is
a number of sampled points. Computing LB1 also requires values of
$f\left( {0,z} \right),f\left( {1,z} \right)$, while computing
LB2 requires only values of $f\left( {1,z} \right)$. In total, numerical computation of $LB^*$ for all input variables would
require $N_F^{LB^*}  = N(d + 1) + 2Nd = N(3d + 1)$ function
evaluations. Computation of all upper bounds require $N_F^{UB}  = N(d + 1)$
 function evaluations. We recall that
the number of function evaluations required for computation of
$S_i^{tot}$ is
$N_F^{S}  = N(d + 1)$
\cite{Salt2010}. The number of sampled points $N$ needed to
achieve numerical convergence can be different for DGSM and
$S_i^{tot}$. It is generally
lower for the case of DGSM. The numerical efficiency of the DGSM method can
be significantly increased by using algorithmic differentiation in the
adjoint (reverse) mode \cite{Griewank2008}. This approach allows
estimating all derivatives at a cost at most 6 times of that for
evaluating the original function $f(x)$ \cite{Jansen2014}.
However, as mentioned above lower bounds also require computation of
$f\left( {0,z} \right),f\left( {1,z} \right)$ so  $N_F^{LB^*}$ would only be reduced to
 $N_F^{LB^*}  = 6N + 2Nd = N(2d + 6)$, while $N_F^{UB}$ would  be equal to $6N$.

\[
\]
\section{DGSM for random variables}\label{sec:4}

Consider a function $f\left( {x_1 ,...,x_d } \right)$, where $x_1 ,...,x_d$ are independent random variables with distribution functions
$F_1 \left( {x_1 } \right),...,F_d \left( {x_d } \right)$. Thus the point
${\bf{x}} = (x_1 ,...,x_d )$ is defined in the Euclidean space $R^d $ and
its measure is $dF_1 \left( {x_1 } \right) \cdot  \cdot  \cdot dF_d \left( {x_d } \right)$.

The following DGSM was introduced in \cite{Sobol2009}:

\begin{equation}
\label{eq27}
\nu _i  = \int_{R^d } {\left( {\frac{{\partial f({\bf{x}})}}{{\partial x_i }}} \right)^2 d} F({\bf{x}}).
\end{equation}

We introduce a new measure

\begin{equation}
\label{eq28}
w_i  = \int_{R^d } {\frac{{\partial f({\bf{x}})}}{{\partial x_i }}d} F({\bf{x}}).
\end{equation}

\subsection{{The lower bounds on $S_i^{tot}$ for normal variables}}

Assume that $x_i$ is normally distributed with the finite variance $\sigma _i^2 $
 and the mean value $\mu _i $.

\begin{theorem}

\begin{equation}
\label{eq29}
\frac{{\sigma _i^2 w_i^2 }}{D} \le S_i^{tot} .
\end{equation}

\end{theorem}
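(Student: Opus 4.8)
The plan is to reuse the Cauchy--Schwarz mechanism of the two previous lower-bound proofs, replacing the ordinary integration by parts (which on the unit cube produced the boundary terms $f(1,{\bf{z}})$, $f(0,{\bf{z}})$) by its Gaussian counterpart. First I would express the new measure $w_i$ through $u_i$. By (\ref{eq9}), $\partial f/\partial x_i = \partial u_i/\partial x_i$, so (\ref{eq28}) reads
\[
w_i = \int_{R^d} \frac{\partial u_i({\bf{x}})}{\partial x_i}\,dF({\bf{x}}).
\]

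The decisive step is a Gaussian integration by parts in the $x_i$ direction. Writing $\phi_i$ for the $N(\mu_i,\sigma_i^2)$ density and using $\phi_i'(x_i) = -\sigma_i^{-2}(x_i-\mu_i)\phi_i(x_i)$, integration by parts in $x_i$ (the remaining factors of the product measure passing through the integral) should yield Stein's identity
\[
\int_{R^d}\frac{\partial u_i({\bf{x}})}{\partial x_i}\,dF({\bf{x}}) = \frac{1}{\sigma_i^2}\int_{R^d}(x_i-\mu_i)\,u_i({\bf{x}})\,dF({\bf{x}}),
\]
that is, $\sigma_i^2 w_i = \int_{R^d}(x_i-\mu_i)u_i({\bf{x}})\,dF({\bf{x}})$.

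I would then apply the Cauchy--Schwarz inequality to this last integral, pairing the factors $(x_i-\mu_i)$ and $u_i({\bf{x}})$:
\[
\Big(\int_{R^d}(x_i-\mu_i)u_i\,dF\Big)^2 \le \int_{R^d}(x_i-\mu_i)^2\,dF\cdot\int_{R^d}u_i^2\,dF = \sigma_i^2\,D_i^{tot},
\]
where $\int_{R^d}(x_i-\mu_i)^2\,dF = \sigma_i^2$ is just the variance of $x_i$ and $\int_{R^d}u_i^2\,dF = D_i^{tot}$ is the probabilistic form of the total partial variance. Combining the two displays gives $\sigma_i^4 w_i^2 \le \sigma_i^2 D_i^{tot}$, hence $\sigma_i^2 w_i^2 \le D_i^{tot}$, and dividing by $D$ produces (\ref{eq29}).

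The step I expect to be the main obstacle is the rigorous justification of Stein's identity, specifically the vanishing of the boundary term $\big[u_i({\bf{x}})\,\phi_i(x_i)\big]_{x_i\to-\infty}^{x_i\to+\infty}$. This requires controlling the growth of $u_i$ at infinity; it is guaranteed by the finiteness of $\nu_i$ in (\ref{eq27}) (square-integrability of $\partial f/\partial x_i$ against the Gaussian measure) combined with the fast decay of $\phi_i$, which together force $u_i\phi_i\to 0$. As in the earlier bounds, the inequality is in fact strict unless $u_i$ is proportional to $x_i-\mu_i$, which is incompatible with the zero-mean condition $\int_{R^d} u_i\,dF = 0$ (the probabilistic analogue of (\ref{eq2})); since the theorem states the non-strict form, the equality case need not be separately excluded.
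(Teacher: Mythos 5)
Your proof is correct and takes essentially the same route as the paper: the paper likewise applies the Cauchy--Schwarz inequality to $\int_{R^d} x_i u_i ({\bf x})\, dF({\bf x})$ and then invokes the Gaussian integration-by-parts (Stein) identity, stated there as equation (\ref{eq31}) with a citation to Hardy--Littlewood--Polya, which you instead derive explicitly via the relation $\phi_i'(x_i) = -\sigma_i^{-2}(x_i-\mu_i)\phi_i(x_i)$; your centred formulation with $x_i-\mu_i$ is in fact the more careful one, since the paper's claim $\int_{R^d} x_i^2\, dF = \sigma_i^2$ holds only for central moments (i.e.\ tacitly for $\mu_i=0$), and your attention to the vanishing of the boundary term $u_i\phi_i$ at infinity is a point the paper passes over.

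One side remark of yours is backwards, however. You assert that $u_i$ proportional to $x_i-\mu_i$ is incompatible with the zero-mean condition $\int_{R^d} u_i\, dF = 0$. Under the Gaussian measure it is perfectly compatible: $\int_{R^d}(x-\mu)\, dF = 0$, so $u(x)=A(x-\mu)$ satisfies the probabilistic analogue of (\ref{eq2}) exactly, as the paper notes explicitly in its proof. Consequently equality in the Cauchy--Schwarz step \emph{can} be attained (for instance when $f$ is linear in $x_i$), and this is precisely why Theorem 6 is stated with $\le$, in contrast to the strict lower bounds (\ref{eq10}) and (\ref{eq15}) on the unit cube, where the candidate extremal functions $B\exp(Ax)$ and $Ax^m$ genuinely violate (\ref{eq2}). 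Since you only used this remark to explain why the non-strict form suffices, the error does not affect the validity of your proof of (\ref{eq29}), but the logic of the equality discussion should be inverted.
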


\begin{proof}

Consider $\int_{R^d } {x_i u_i ({\bf{x}})d} F({\bf{x}})$. Applying the Cauchy--Schwarz inequality we obtain

\begin{equation}
\label{eq30}
\left( {\int_{R^d } {x_i u_i ({\bf{x}})d} F({\bf{x}})} \right)^2  \le \int_{R^d } {x_i^2 d} F({\bf{x}}) \cdot \int_{R^d } {u_i^2 ({\bf{x}})d} F({\bf{x}}).
\end{equation}

\noindent
Equality in (\ref{eq30}) can be attained if functions $u_i ({\bf{x}})$ and $x_i$ are linearly dependent. For simplicity consider a
one-dimensional case. Let's assume
\[
u(x) = A(x - \mu ),
\]

\noindent
where $A\neq0$ is a constant. This solution satisfies condition (\ref{eq2}) for normally distributed
variable $x$ with the mean value $\mu$: $\int_{R^d } {u(x)dF(x)}  = 0$.

For normally distributed variables the following equality is true \cite{Hardy1973}:

\begin{equation}
\label{eq31}
\left( {\int_{R^d } {x_i u_i ({\bf{x}})d} F({\bf{x}})} \right)^2  = \int_{R^d } {x_i^2 d} F({\bf{x}}) \cdot \int_{R^d } {\frac{{\partial u_i^{} ({\bf{x}})}}{{\partial x_i }}d} F({\bf{x}}).
\end{equation}

By definition $\int_{R^d } {x_i^2 d} F({\bf{x}}) = \sigma _i^2$.
Using (\ref{eq30}) and (\ref{eq31}) and dividing the resulting inequality by
$D$ we obtain the lower bound (\ref{eq29}).
\qed
\end{proof}

\subsection{{The upper bounds on $S_i^{tot}$ for normal variables}}

The following Theorem 7 is a generalization of Theorem 1.

\begin{theorem}
Assume that $c \le \left| {\frac{{\partial f}}{{\partial x_i }}} \right| \le C$, then

\begin{equation}
\label{eq32}
\frac{{\sigma _i^2 c^2 }}{D} \le S_i^{tot}  \le \frac{{\sigma _i^2 C^2 }}{D}.
\end{equation}

\noindent
The constant factor $\sigma _i^2$ cannot be improved.
\end{theorem}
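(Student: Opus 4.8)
The plan is to prove the equivalent two-sided estimate $\sigma_i^2 c^2 \le D_i^{tot} \le \sigma_i^2 C^2$ on the total partial variance; dividing by $D$ and recalling $S_i^{tot} = D_i^{tot}/D$ then yields (\ref{eq32}) immediately. Throughout I would use that $u_i = f - \int f\,dF_i$ has zero conditional mean in $x_i$, so that the orthogonality condition (\ref{eq2}) survives conditioning on ${\bf z}$, together with $\partial f/\partial x_i = \partial u_i/\partial x_i$ from (\ref{eq9}). One preliminary remark feeds both halves: since $f$ is continuously differentiable and $|\partial f/\partial x_i|\ge c>0$, the intermediate value theorem forces $\partial f/\partial x_i$ to keep a constant sign, so $f$ is strictly monotone in $x_i$.

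For the lower bound I would simply specialize Theorem 6, which already gives $S_i^{tot}\ge \sigma_i^2 w_i^2/D$ with $w_i=\int_{R^d}(\partial f/\partial x_i)\,dF$ as in (\ref{eq28}). Because the derivative has constant sign and $|\partial f/\partial x_i|\ge c$, the integrand never cancels, so $|w_i|\ge c$ and hence $w_i^2\ge c^2$, giving $S_i^{tot}\ge \sigma_i^2 c^2/D$. Thus the lower half of (\ref{eq32}) is essentially free once (\ref{eq29}) is in hand.

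The real work is the upper bound, for which I would invoke the one-dimensional Gaussian Poincar\'e inequality with its sharp constant: for any $g$ with $\int g\,dF_i=0$ one has $\int_R g^2\,dF_i\le \sigma_i^2\int_R (g')^2\,dF_i$. Applying this for each fixed ${\bf z}$ to $g(x_i)=u_i(x_i,{\bf z})$ (legitimate by the conditional zero-mean property) and integrating over ${\bf z}$ gives $D_i^{tot}=\int_{R^d}u_i^2\,dF\le \sigma_i^2\int_{R^d}(\partial u_i/\partial x_i)^2\,dF=\sigma_i^2\nu_i$, where $\nu_i$ is the measure (\ref{eq27}); finally $\nu_i=\int_{R^d}(\partial f/\partial x_i)^2\,dF\le C^2$ since $dF$ is a probability measure. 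The main obstacle is establishing this Poincar\'e inequality with the exact constant $\sigma_i^2$. The cleanest self-contained route is a Hermite expansion: with polynomials $H_k$ orthogonal for $N(\mu_i,\sigma_i^2)$ normalized so that $\int H_k^2\,dF_i=k!$ and $H_k'=(k/\sigma_i)H_{k-1}$, writing $g=\sum_{k\ge1}a_k H_k$ (the $k=0$ term drops out by zero mean) yields $\int g^2\,dF_i=\sum_{k\ge1}a_k^2\,k!$ and $\sigma_i^2\int (g')^2\,dF_i=\sum_{k\ge1}a_k^2\,k\cdot k!$, so the inequality follows term by term from $k\ge1$.

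For sharpness of $\sigma_i^2$ I would exhibit the extremal case $u_i=a(x_i-\mu_i)$, which satisfies (\ref{eq2}) for the normal law and has $|\partial f/\partial x_i|\equiv|a|$, so $c=C=|a|$ and $D_i^{tot}=a^2\sigma_i^2$; both inequalities in (\ref{eq32}) then become equalities. Hence neither the value of the constant $\sigma_i^2$ nor its placement can be improved. Equivalently, this is exactly the $k=1$ equality case of the Hermite computation above.
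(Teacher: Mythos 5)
Your proof is correct and complete. One structural remark: the paper does not actually contain a proof of this theorem --- like Theorem 8 it is deferred to \cite{Sobol2009} --- so yours is the only self-contained argument on the table, and it follows essentially the same classical route as the cited source: the two halves of (\ref{eq32}) are the two sides of Chernoff's inequality for Gaussian measure. For the lower half you correctly recycle Theorem 6 (whose Stein-identity-plus-Cauchy--Schwarz proof the paper does give via (\ref{eq30})--(\ref{eq31})) and add the one genuinely needed observation: a continuous derivative with $\left| {\partial f}/{\partial x_i } \right| \ge c > 0$ on the connected set $\R^d$ cannot change sign, whence $|w_i| = \int_{\R^d} \left| {\partial f}/{\partial x_i} \right| dF \ge c$ (and for $c=0$ the bound is trivial, so nothing is lost). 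For the upper half, applying the sharp one-dimensional Gaussian Poincar\'e inequality conditionally on ${\bf z}$ --- legitimate, as you note, because $u_i(\cdot,{\bf z})$ has zero $F_i$-mean for every fixed ${\bf z}$ --- gives $D_i^{tot} \le \sigma_i^2 \nu_i$; that is, you prove Theorem 8 en route and deduce the upper half of (\ref{eq32}) from $\nu_i \le C^2$, which is exactly the logical structure in \cite{Sobol2009}. Your Hermite-expansion derivation of the sharp constant is the standard one, and your extremal example $u_i = a(x_i - \mu_i)$ (the $k=1$ equality case) is the same linear function used there to show $\sigma_i^2$ cannot be improved; note it witnesses sharpness of both inequalities simultaneously since $c = C = |a|$. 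Two small points you should state explicitly to make the Hermite step airtight: completeness of the Hermite system in $L_2(dF_i)$ (valid because the Gaussian moment problem is determinate), and $u_i(\cdot,{\bf z}) \in L_2(dF_i)$, which holds automatically because $u_i$ is $C$-Lipschitz in $x_i$. Both are routine and do not affect the verdict.
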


\begin{theorem}

\begin{equation}
\label{eq33}
S_i^{tot}  \le \frac{{\sigma _i^2 }}{D}\nu _i.
\end{equation}

\noindent
The constant factor $\sigma _i^2$ cannot be reduced.
\end{theorem}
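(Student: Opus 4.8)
The plan is to read (\ref{eq33}) as a Poincar\'e (spectral-gap) inequality for the normal law, applied in the single coordinate direction $x_i$, in exact analogy with the proof of Theorem 6, where the weighted Hardy inequality (\ref{eq26}) played this role for the uniform case. First I would reduce the statement to a variance estimate. From (\ref{eq4}) we have $S_i^{tot} = D_i^{tot}/D$ with $D_i^{tot} = \int_{R^d} u_i^2({\bf{x}})\,dF({\bf{x}})$, and from (\ref{eq9}) that $\partial f/\partial x_i = \partial u_i/\partial x_i$, so that $\nu_i = \int_{R^d}\left(\partial u_i/\partial x_i\right)^2 dF({\bf{x}})$. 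Hence (\ref{eq33}) is equivalent to
\[
\int_{R^d} u_i^2({\bf{x}})\,dF({\bf{x}}) \le \sigma_i^2 \int_{R^d}\left(\frac{\partial u_i({\bf{x}})}{\partial x_i}\right)^2 dF({\bf{x}}),
\]
and it suffices to establish this inequality.

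The key ingredient is the Gaussian analogue of (\ref{eq26}), i.e.\ the one-dimensional Gaussian Poincar\'e inequality \cite{Hardy1973}: for a normal variable with variance $\sigma_i^2$ and any $u$ with $u,u' \in L_2$,
\[
\int u^2\,dF_i - \left(\int u\,dF_i\right)^2 \le \sigma_i^2 \int (u')^2\,dF_i,
\]
with equality precisely when $u$ is affine. (Equivalently, this is the spectral gap of the Ornstein--Uhlenbeck operator, whose eigenfunctions are the Hermite polynomials and whose lowest nontrivial eigenvalue is attained by the degree-one Hermite polynomial.) To apply it in the present multivariate setting I would fix the remaining variables ${\bf{z}}$ and regard $x_i \mapsto u_i(x_i,{\bf{z}})$ as a one-dimensional function. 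Because $u_i$ collects exactly those ANOVA terms whose index set contains $i$, the orthogonality relation (\ref{eq1ort}) forces $\int u_i(x_i,{\bf{z}})\,dF_i(x_i) = 0$ for every fixed ${\bf{z}}$, a stronger fact than (\ref{eq2}); thus the variance on the left above is simply $\int u_i^2\,dF_i$. Applying the inequality slicewise and integrating over ${\bf{z}}$ against the product of the remaining marginals yields the displayed estimate, and dividing by $D$ gives (\ref{eq33}).

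For the sharpness claim I would use the equality case. Equality in the Poincar\'e inequality requires $u_i$ to be affine in $x_i$; taking $f({\bf{x}}) = x_i$, so that $u_i = x_i - \mu_i$ and $\partial f/\partial x_i \equiv 1$, one finds $\nu_i = 1$ and $D = D_i^{tot} = \sigma_i^2$, whence $S_i^{tot} = 1 = (\sigma_i^2/D)\,\nu_i$. This shows that the constant factor $\sigma_i^2$ cannot be reduced.

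I expect the main obstacle to be the justification of the Gaussian Poincar\'e inequality itself together with the verification that the conditional mean-zero property legitimately reduces the problem to the one-variable inequality before the outer variables are integrated out; once these two points are secured, the remaining manipulations are routine.
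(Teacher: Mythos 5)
Your proof is correct and takes essentially the same route as the source: the paper itself gives no argument for this theorem, deferring to \cite{Sobol2009}, where the bound is likewise obtained from the one-dimensional Gaussian Poincar\'e (spectral-gap) inequality $\int u^2\,dF_i-\left(\int u\,dF_i\right)^2\le\sigma_i^2\int (u')^2\,dF_i$ applied in the $x_i$ direction and integrated over ${\bf{z}}$, with a function linear in $x_i$ showing the constant $\sigma_i^2$ cannot be improved. Your justification of the slicewise reduction via the conditional mean-zero property $\int u_i(x_i,{\bf{z}})\,dF_i(x_i)=0$, which follows from (\ref{eq1ort}) and is indeed stronger than (\ref{eq2}), is exactly the right point to secure, so the proposal is sound as it stands.
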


 Proofs are presented in \cite{Sobol2009}.

\[
\]
\section{Test cases}\label{sec:5}

In this section we present the results of analytical and numerical
estimation of $S_i$, $S_i^{tot}$, LB1, LB2 and UB1,
UB2. The analytical values for DGSM and $S_i^{tot}$ were calculated and compared with numerical results.
For text case 2 we present convergence plots in the form of root mean square
error (RMSE) versus the number of sampled points $N$. To reduce the scatter in
the error estimation the values of RMSE were averaged over $K$ = 25 independent
runs:

\[
\varepsilon _i  = \left( {\frac{1}{K}\sum\limits_{k = 1}^K {\left( {\frac{{I_{i,k}^*  - I_0 }}{{I_0^{} }}} \right)^2 } } \right)^{\frac{1}{2}}.
\]

Here $I_i^*$ is numerically computed values of $S_i^{tot}$, LB1, LB2 or UB1, UB2, $I_0$ is the corresponding analytical value of
$S_i^{tot}$, LB1, LB2 or UB1,
UB2. The RMSE can be approximated by a trend line $cN^{ - \alpha }$. Values of ($- \alpha)$ are given in brackets on the plots.
QMC integration based on Sobol' sequences was used in all numerical tests.

\bigskip
\textbf{Example 1.} Consider a linear with respect to $x_i$ function:

\[
f({\bf{x}}) = a(z)x_i  + b(z).
\]

For this function $S_i=S_i^{tot}$, $D_i^{tot}  = \frac{1}{{12}}\int_{H^{d - 1} } {a^2 (z)dz} $,
$\nu _i  = \int_{H^{d - 1} } {a^2 (z)dz} $,
\medskip
$LB1 = \frac{{\left( {\int_{H^d } {\left( {a^2 (z) - 2a^2 (z)x_i } \right)dzdx_i } } \right)^2 }}{{4D\int_{H^{d - 1} } {a^2 (z)dz} }} = 0$ and $\gamma (m) = \frac{{(2m + 1)m^2 \left( {\int_{H^{d - 1} } {a(z)dz} } \right)^2 }}{{4(m + 2)^2 (m + 1)^2 D}}$.
\smallskip
A maximum value of $\gamma (m) $ is attained at $m^*
$=3.745, when $\gamma ^* (m^* ) = \frac{{0.0401}}{D}\left( {\int_{} {a(z)dz} } \right)^2 $.
\smallskip
The lower and upper bounds are $LB* \approx {\rm{0}}{\rm{.48}}S_i^{tot} $. $UB1 \approx 1.{\rm{22}}S_i^{tot} $. $UB2 = \frac{1}{{12D}}\int_0^1 {a(z)^2 dz}  = S_i^{tot}$. For this test function UB2 $<$ UB1.

\bigskip
\textbf{Example 2.} Consider the so-called g-function which is often used in
GSA for illustration purposes:

\[
f({\bf{x}}) = \prod\limits_{i = 1}^d {g_i },
\]

\noindent
where $g_i  = \frac{{|4x_i  - 2| + a_i }}{{1 + a_i }}$,
$a_i (i = 1,...,d)$ are constants. It
is easy to see that for this function $f_i (x_i ) = (g_i  - 1)$,
 $u_i ({\bf{x}}) = (g_i  - 1)\prod\limits_{j = 1,j \ne i}^d {g_j } $
 and as a result LB1=0. The total variance is $D =  - 1 + \prod\limits_{j = 1}^d {\left( {1 + \frac{{1/3}}{{(1 + a_j )^2 }}} \right)} $. The analytical values of
$S_i$, $S_i^{tot}$ and LB2 are given in Table 1.

\begin{table}[!hbp]
\caption{ The analytical expressions for $S_i$, $S_i^{tot}$ and LB2 for g-function}
\label{tab1}
\begin{tabular}
{|p{70pt}|p{120pt}|p{125pt}|}
\hline
$S_i$&
$S_i^{tot}$&
$\gamma (m)$ \\
\hline
$\displaystyle\frac{{1/3}}{{(1 + a_i )^2 D}}$&
$\displaystyle\frac{{\frac{{1/3}}{{(1 + a_i )^2 }}\prod\limits_{j = 1,j \ne i}^d {\left( {1 + \frac{{1/3}}{{(1 + a_j )^2 }}} \right)} }}{D} $&
$\displaystyle\frac{{(2m + 1)\left[ {1 - \frac{{4\left( {1 - (1/2)^{m + 1} } \right)}}{{m + 2}}} \right]^2 }}{{(1 + a_i )^2 (m + 1)^2 D}} $ \\
\hline
\end{tabular}
\end{table}

By solving equation $\frac{{d\gamma (m)}}{{dm}} = 0$
, we find that $m^*$=9.64, $\gamma (m^* ) = \frac{{0.0772}}{{(1 + a_i )^2 D}}$.
\smallskip
It is interesting to note that $m^*$ does not depend on $a_i$, $ i = 1,2,...,d$
and $d$. In the extreme cases:
\smallskip
 if $a_i  \to \infty $
 for all $i$, $\frac{{\gamma (m^* )}}{{S_i^{tot} }} \to 0.257$, $\frac{{S_i }}{{S_i^{tot} }} \to 1$,
 while if $a_i  \to 0 $ for all $i$, $\frac{{\gamma (m^* )}}{{S_i^{tot} }} \to \frac{{0.257}}{{(4/3)^{d - 1} }}$,
$\frac{{S_i }}{{S_i^{tot} }} \to \frac{1}{{(4/3)^{d - 1} }}$. The analytical
expression for $S_i^{tot}$, UB1
and UB2 are given in Table 2.

\begin{table}[!hbp]
\caption{The analytical expressions for $S_i^{tot}$ UB1 and UB2 for g-function}
\label{tab2}
\begin{tabular}
{|p{105pt}|p{105pt}|p{105pt}|}
\hline
$S_i^{tot}$&
$UB1$&
$UB2$ \\
\hline
$\displaystyle\frac{{\frac{{1/3}}{{(1 + a_i )^2 }}\prod\limits_{j = 1,j \ne i}^d {\left( {1 + \frac{{1/3}}{{(1 + a_j )^2 }}} \right)} }}{D}$&
$\displaystyle\frac{{16\prod\limits_{j = 1,j \ne i}^d {\left( {1 + \frac{{1/3}}{{(1 + a_j )^2 }}} \right)} }}{{(1 + a_i )^2 \pi ^2 D}}$&
$\displaystyle\frac{{4\prod\limits_{j = 1,j \ne i}^d {\left( {1 + \frac{{1/3}}{{(1 + a_j )^2 }}} \right)} }}{{3(1 + a_i )^2 D}}$\\
\hline
\end{tabular}
\end{table}

For this test function $\frac{{S_i^{tot} }}{{{\rm{UB1}}}} = \frac{{\pi ^2 }}{{48}}
$, $\frac{{S_i^{tot} }}{{{\rm{UB2}}}} = \frac{1}{4}$, hence
$\frac{{{\rm{UB2}}}}{{{\rm{UB1}}}} = \frac{{\pi ^2 }}{{12}} < 1$. Values of $S_i$, $S_i^{tot}$, UB and LB2 for the case of \textbf{\textit{a}}=[0,1,4.5,9,99,99,99,99], $d$=8 are given in Table 3
and shown in Figure 1. We can conclude that for this test function the knowledge of LB2 
and UB1, UB2 allows to rank correctly all the variables in the order of
their importance.

\begin{table}[!hbp]
\caption{Values of LB*, $S_i$, $S_i^{tot}$ , UB1 and
UB1. Example 2, \textbf{\textit{a}}=[0,1,4.5,9,99,99,99,99], $d$=8.}
\label{tab3}
\begin{tabular}
{|p{35pt}|p{50pt}|p{50pt}|p{60pt}|p{60pt}|p{60pt}|}
\hline
$ $&
$x_1$&
$x_2$&
$x_3$&
$x_4$&
$x_5...x_8$ \\
\hline
$LB^*$&
$0.166$&
$0.0416$&
$0.00549$&
$0.00166$&
$0.000017$\\
\hline
$S_i$&
$0.716$&
$0.179$&
$0.0237$&
$0.00720$&
$0.0000716$\\
\hline
$S_i^{tot}$&
$0.788$&
$0.242$&
$0.0343$&
$0.0105$&
$0.000105$\\
\hline
$UB1$&
$3.828$&
$1.178$&
$0.167$&
$0.0509$&
$0.000501$\\
\hline
$UB2$&
$3.149$&
$0.969$&
$0.137$&
$0.0418$&
$0.00042$\\
\hline
\end{tabular}
\end{table}


\begin{figure}[!]
\centering
\includegraphics[scale=0.7]{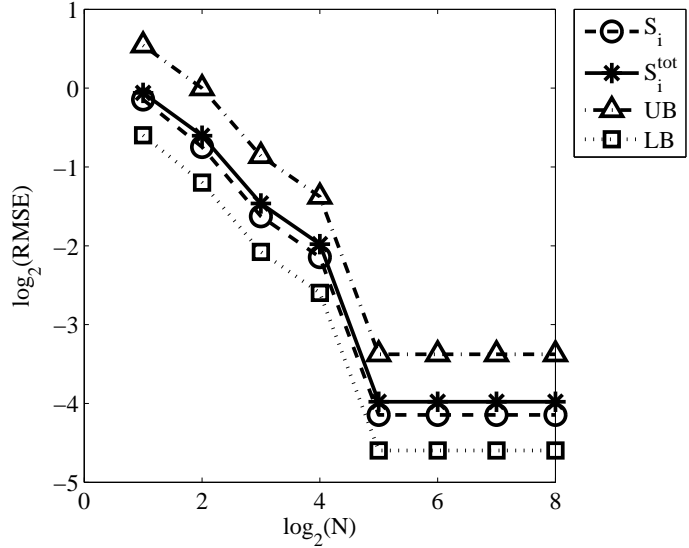}
\label{fig_sim_1}
\caption{ Values of $S_i$,$S_i^{tot}$, LB2 and UB1 for
all input variables. Example 2,
\textbf{\textit{a}}=[0,1,4.5,9,99,99,99,99], $d$=8.}
\end{figure}


Fig. 2 presents RMSE of numerical estimations of  $S_i^{tot}$, UB1 and LB2. For an individual input LB2 has the highest convergence rate, following by $S_i^{tot}$, and UB1 in terms of the number of sampled points. However, we recall that computation of all indices requires
$N_F^{LB*}  = N(3d + 1)$ function
evaluations for LB, while for $S_i^{tot}$ this number is $N_F^S  = N(d + 1)$ and for UB it is also $N_F^{UB}  = N(d + 1)$.

\begin{figure}[!]
\centering
{
\subfigure[] {\includegraphics[width=3.5cm,clip]{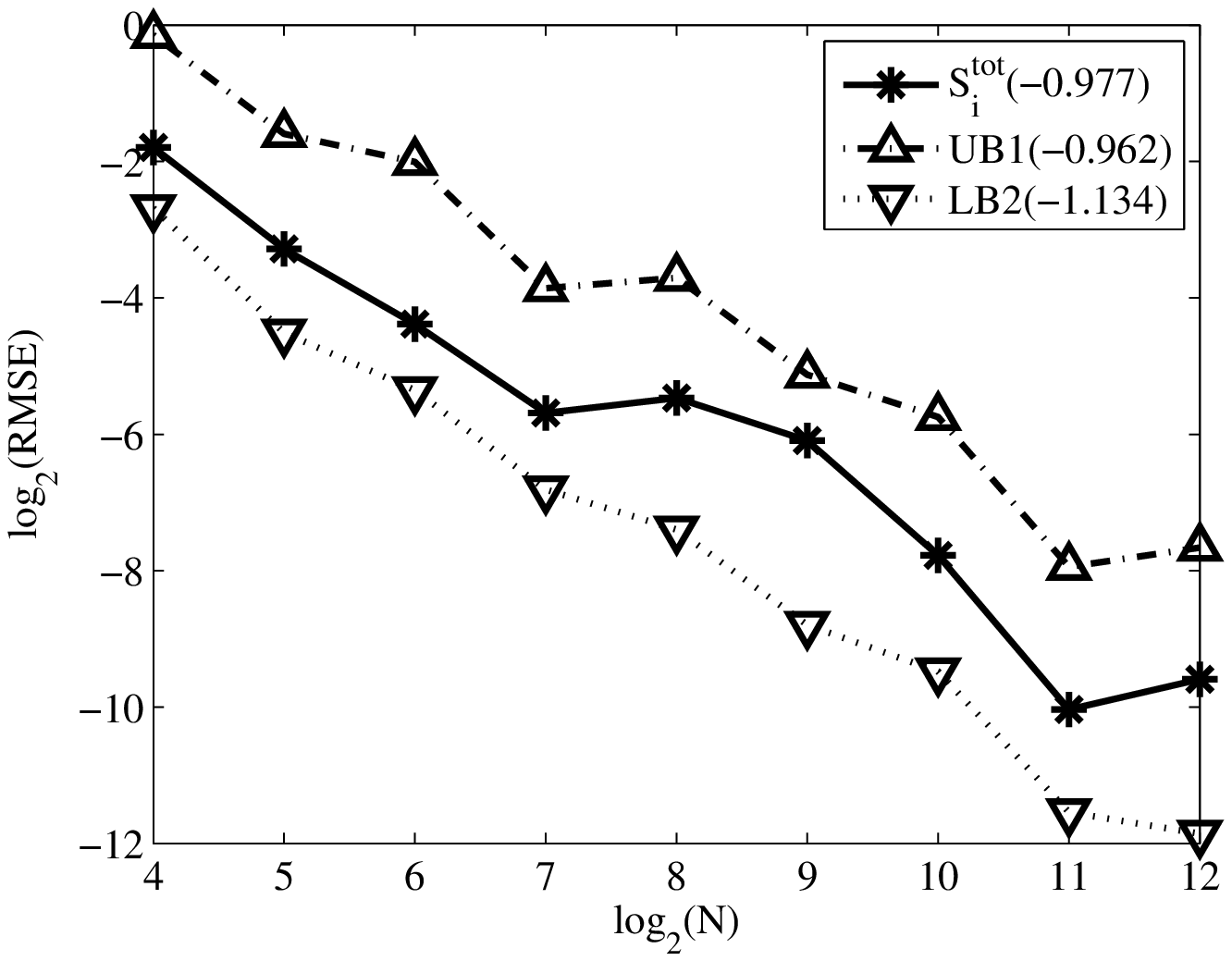}  }
\label{Fig.sub.1}
\subfigure [] {\includegraphics[width=3.5cm,clip]{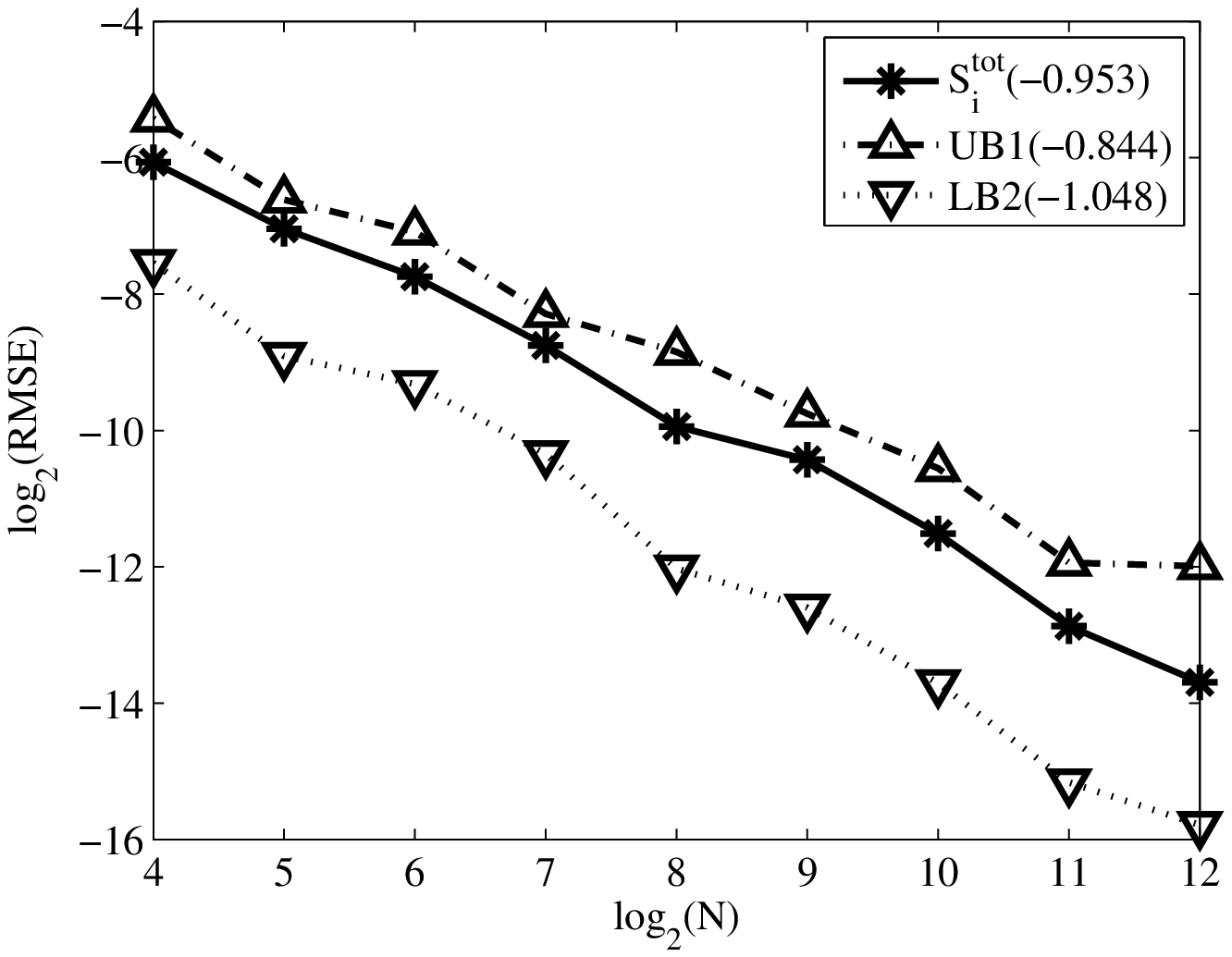} }
\label{Fig.sub.2}
\subfigure [] {\includegraphics[width=3.5cm,clip]{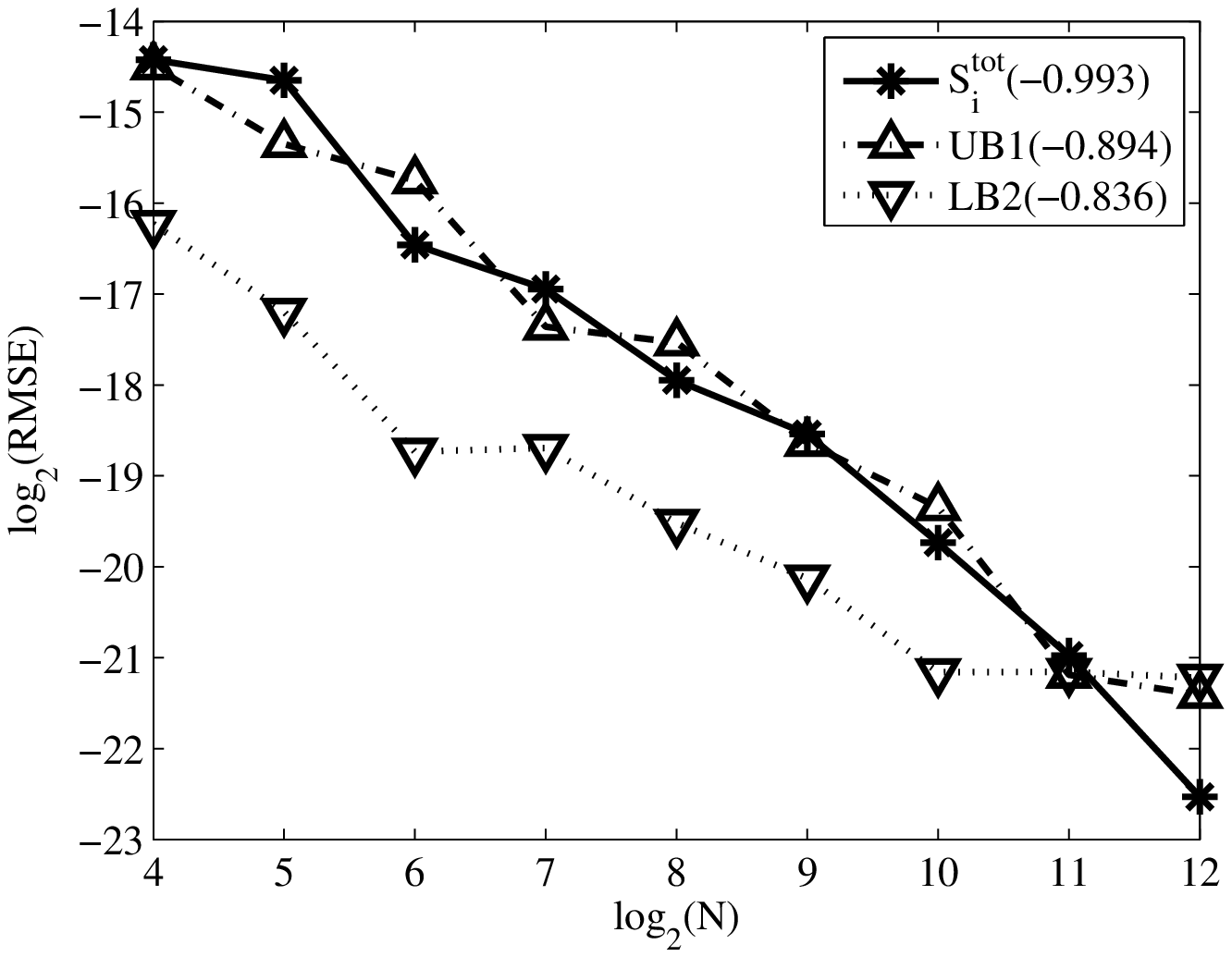} }
\label{Fig.sub.3}
}
\label{fig_sim_2}
\caption{RMSE of $S_i^{tot}$, UB
and LB2 versus the number of sampled points. Example 2,
\textbf{\textit{a}}=[0,1,4.5,9,99,99,99,99], $d$=8. Variable 1 (a), variable
3 (b) and variable 5 (c).}
\end{figure}
\bigskip
\textbf{Example 3. }Hartmann function $f({\bf{x}}) =  - \sum\limits_{i = 1}^4 {c_i \exp \left[ { - \sum\limits_{j = 1}^n {\alpha _{ij} (x_j  - p_{ij} )^2 } } \right]} $, $x_i  \in [0,1]$. For this test case a relationship between the values LB1, LB2 and
$S_i$ varies with the
change of input (Table 4, Figure 3): for variables $x_2$ and $x_6$ LB1$ > $
$S_i$$ > $
LB2, while for all other variables LB1$ < $
LB2 $ < $$S_i$. LB* is much smaller than
$S_i^{tot}$ for all inputs.
Values of $m$* also vary with the change of input. For all variables but variable 2
UB1 $ > $ UB2.

\begin{figure}[!]
\centering
\includegraphics[scale=0.7]{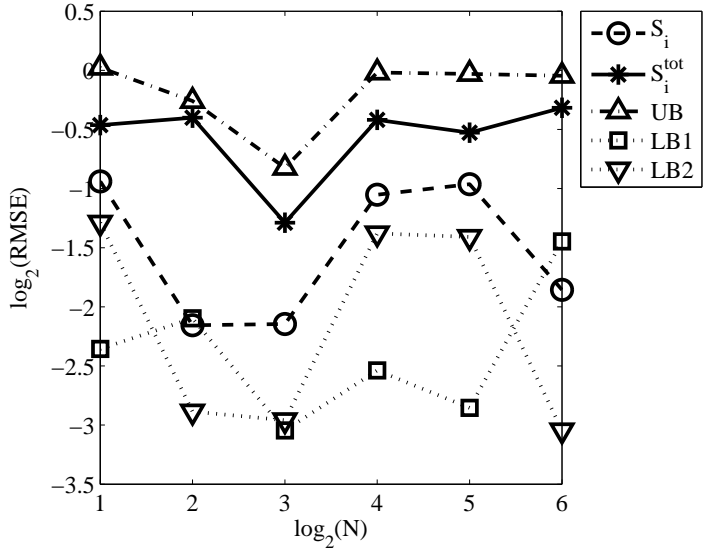}
\label{fig_sim_3}
\caption{Values of $S_i$,$S_i^{tot}$, UB1, LB1 and LB2
for all input variables. Example 3.}
\end{figure}

\begin{table}[!]
\caption{Values of $m^*$, LB1, LB2, UB1, UB2,$S_i$ and $S_i^{tot}$ for all input variables.}
\label{tab4}
\begin{tabular}
{|p{30pt}|p{45pt}|p{45pt}|p{45pt}|p{50pt}|p{50pt}|p{50pt}|}
\hline
$ $&
$x_1$&
$x_2$&
$x_3$&
$x_4$&
$x_5$&
$x_6$ \\
\hline
$LB1$&
$0.0044$&
$0.0080$&
$0.0009$&
$0.0029$&
$0.0014$&
$0.0357$\\
\hline
$LB2$&
$0.0515$&
$0.0013$&
$0.0011$&
$0.0418$&
$0.0390$&
$0.0009$\\
\hline
$m^*$&
$4.6$&
$10.2$&
$17.0$&
$5.5$&
$3.6$&
$19.9$\\
\hline
$LB^*$&
$0.0515$&
$0.0080$&
$0.0011$&
$0.0418$&
$0.0390$&
$0.0357$\\
\hline
$S_i$&
$0.115$&
$0.00699$&
$0.00715$&
$0.0888$&
$0.109$&
$0.0139$\\
\hline
$S_i^{tot}$&
$0.344$&
$0.398$&
$0.0515$&
$0.381$&
$0.297$&
$0.482$\\
\hline
$UB1$&
$1.089$&
$0.540$&
$0.196$&
$1.088$&
$1.073$&
$1.046$\\
\hline
$UB2$&
$1.051$&
$0.550$&
$0.150$&
$0.959$&
$0.932$&
$0.899$\\
\hline
\end{tabular}
\end{table}

\section{Conclusions}\label{sec:6}

We can conclude that using lower and upper bounds based on DGSM it is
possible in most cases to get a good practical estimation of the values of
$S_i^{tot}$ at a fraction
of the CPU cost for estimating $S_i^{tot}$. Small values of upper bounds imply small values of
$S_i^{tot}$. DGSM can be used
for fixing unimportant variables and subsequent model reduction. For linear
function and product function, DGSM can give the same variable ranking as
$S_i^{tot}$. In a general case
variable ranking can be different for DGSM and variance based methods. Upper
and lower bounds can be estimated using MC/QMC integration methods using the
same set of partial derivative values. Partial derivatives can be efficiently
estimated using algorithmic differentiation in the reverse (adjoint) mode.

We note that all bounds should be computed with sufficient accuracy. Standard
techniques for monitoring convergence and accuracy of MC/QMC estimates
should be applied to avoid erroneous results.

\[
\]

\begin{acknowledgement}
The authors would like to thank Prof. I. Sobol' his invaluable contributions
to this work. Authors also gratefully acknowledge the financial support by
the EPSRC grant EP/H03126X/1.
\end{acknowledgement}

%
\bibliographystyle{spmpsci}
\bibliography{mybibfile}

\begin{thebibliography}{99.}%


\bibitem{Griewank2008}
A.~Griewank and A.~Walther.
\newblock {\em Evaluating derivatives: Principles and techniques
of algorithmic differentiation.}
\newblock SIAM Philadelphia, PA, 2008.

\bibitem{Hardy1973}
G.H.~ Hardy, J.E.~Littlewood and  G.~Polya.
\newblock {\em Inequalities.}
\newblock  Cambridge University Press, Second edition, 1973.

\bibitem{Homma1996}
T. Homma and  A. Saltelli.
\newblock Importance measures in global sensitivity analysis of
model output.
\newblock  {\em Reliability Engineering and System Safety,} 52(1):1--17,  1996.

\bibitem{Jansen2014}
K. Jansen, H. Leovey, A. Nube, A. Griewank. and M. Mueller-Preussker.
\newblock  A first look at quasi-Monte Carlo for lattice field theory problems.
\newblock  {\em Comput. Phys. Commun.}, 185:948--959, 2014.

\bibitem{Kiparis2009}
A. Kiparissides, S. Kucherenko, A. Mantalaris and E.N. Pistikopoulos.
\newblock Global sensitivity analysis challenges in biological systems modeling.
\newblock  {\em J. Ind. Eng. Chem. Res.}, 48(15):7168--7180, 2009.

\bibitem{Kuch2009}
S. Kucherenko, M.Rodriguez-Fernandez, C.Pantelides and N.Shah.
\newblock Monte Carlo evaluation of derivative based global sensitivity measures
\newblock  {\em Reliability Engineering and System Safety}, 94(7):1135--1148, 2009.

\bibitem{Lamboni2013}
M. Lamboni, B. Iooss, A.L. Popelin and F. Gamboa.
\newblock Derivative based global sensitivity measures: general links with Sobol's indices and numerical
tests.
\newblock  {\em Math. Comput. Simulat.}, 87:45--54, 2013.

\bibitem{Morris1991}
M.D. Morris.
\newblock Factorial sampling plans for preliminary computational experiments.
\newblock  {\em Technometrics}, 33:161--174, 1991.

\bibitem{Salt2008}
A. Saltelli, M. Ratto, T. Andres, F. Campolongo, J. Cariboni, D. Gatelli, M.
Saisana and S. Tarantola.
\newblock {\em Global sensitivity analysis: The Primer.} Wiley, New York, 2008.

\bibitem{Salt2010}
A. Saltelli, P. Annoni, I. Azzini, F. Campolongo, M. Ratto and S. Tarantola.
\newblock Variance based sensitivity analysis of model output: Design and estimator
for the total sensitivity index.
\newblock  {\em Comput. Phys. Commun.}, 181(2):259--270, 2010.

\bibitem{Sobol1993}
I.M. Sobol'
\newblock  Sensitivity estimates for nonlinear mathematical models.
 Matem.
\newblock  {\em Modelirovanie }, 2: 112-118,  1990  (in Russian). English translation:  {\em Math.
Modelling and Comput. Experiment}, 1(4):407--414, 1993.

\bibitem{Sobol1995}
I.M. Sobol' and A. Gershman.
\newblock On an altenative global sensitivity estimators. In {\em Proc SAMO}, Belgirate, 1995. pages 40--42, 1995.

\bibitem{Sobol2001}
I.M. Sobol'.
\newblock Global sensitivity indices for nonlinear mathematical models
and their Monte Carlo estimates.
\newblock  {\em Math. Comput. Simulat.} 55(1-3):271--280, 2001.

\bibitem{Sobol2005}
I.M. Sobol' and S. Kucherenko.
\newblock Global sensitivity indices for nonlinear mathematical models. Review.
\newblock  {\em Wilmott Magazine}, 1:56--61, 2005.

\bibitem{Sobol2009}
I.M. Sobol' and S. Kucherenko.
\newblock Derivative based global sensitivity measures and their link with global sensitivity indices.
\newblock  {\em Math. Comput. Simulat.}, 79(10):3009--3017, 2009.

\bibitem{Sobol2010}
I.M. Sobol' and S. Kucherenko.
\newblock A new derivative based importance criterion for groups of variables and its link with the global sensitivity indices.
\newblock  {\em Comput. Phys. Commun.}, 181(7):1212--1217, 2010.

\end{thebibliography}
%

\end{document}